\documentclass[11pt,a4paper,leq]{amsart}
%%%%%%%%%%%%%%%%%%%%%%%%%%%%%%%%%%%%%%%%%%%%%%%%%%%%%%%%%%%%%%%%%%%%%%%%%%%%%%%%%%%%%%%%%%%%%%%%%%%%%%%%%%%%%%%%%%%%%%%%%%%%%%%%%%%%%%%%%%%%%%%%%%%%%%%%%%%%%%%%%%%%%%%%%%%%%%%%%%%%%%%%%%%%%%%%%%%%%%%%%%%%%%%%%%%%%%%%%%%%%%%%%%%%%%%%%%%%%%%%%%%%%%%%%%%%
\usepackage{amssymb}
\usepackage{amsmath}
\usepackage{amssymb}

%%%%%%%%%%%%%%%%%%%%%%%%%%%%%%%%%%%%%%%%%%%%%%%%%%%%%%%%%%%%%%%%%%%%%
%
%
%
%
%
%
%
%
%
%
%
%
%
%
%
%
%
%
%
%
%
%
% Macros for text size operators:

%JCS - added braces and \mathop around \displaystyle\int, etc.
%
%
%
%
%
%
%
%
%
%
%
%
%
%
%
%
%
%
%

%
%Macros for display size operators:
%

%
%
%
%
%
%
%
%
%
%
%
%
%
%
%
%
%
%

%%%%%%%%%%%%%%%%%%%%%%%%%%%%%%%%%%%%%%%%%%%%%%%%%%%%%%%%%%%%%%%%%%%%%%%%
%
%
% macros for user - defined functions
%
%

%

%%%%%%%%%%%%%%%%%%%%%%%%%%%%%%%%%%%%%%%%%%%%%%%%%%%%%%%%%%%%%%%%%%%%%%%%%%%%%
\def\constr#1^#2{\mathrel{\mathop{\kern 0pt#1}\limits^{#2}}}
\def\build#1_#2{\mathrel{\mathop{\kern 0pt#1}\limits_{#2}}}
  
at 8pt

% ----------------------------------------------------------------
\vfuzz2pt % Don't report over-full v-boxes if over-edge is small
\hfuzz2pt % Don't report over-full h-boxes if over-edge is small
% thmS -------------------------------------------------------

\newtheorem{thm}{Theorem}[section]

\newtheorem{lem}[thm]{Lemma}
\newtheorem{prop}[thm]{Proposition}
\theoremstyle{definition}
\newtheorem{defn}[thm]{Definition} \theoremstyle{remark}
\newtheorem{rem}[thm]{Remark}
\numberwithin{equation}{section}
% MATH -----------------------------------------------------------
\vsize=29.7truecm \hsize=21truecm \topmargin=-1.4truecm
\oddsidemargin=0.5truecm \evensidemargin=0.5truecm \flushbottom

\def\tag{\hfill{}}

%%%%%%%%%%%%%%%%%%%%%%%%%%%%%%%%%%%%%%%%%%%%%%%%%%%%%%%%%%%%%%%%%%%%%%%%%%%%%
%\setcounter{MaxMatrixCols}{10} %
\setlength{\textwidth}{16cm}
\setlength{\oddsidemargin}{0mm} %
\setlength{\evensidemargin}{0mm} %
%%%%%%%%%%%%%%%%%%%%%%%%%%%%%%%%%%%%%%%%%%%%%%%%%%%%%%%%%%%%%%%%%%%%%

\begin{document}

\title{Arithmetic of curves over two dimensional local field}
\author{Belgacem DRAOUIL}

\address{D\'{e}partement de Math\'{e}matiques.\\
Facult\'{e} des Sciences de Bizerte \ 7021, Zarzouna Bizerte.}
\email{Belgacem.Draouil@fsb.rnu.tn}

%\thanks{Thanks to .....}

\begin{abstract}
We study the class field theory of curve defined over two
dimensional local field. The approch used here is a combination of
the work of Kato-Saito, and Yoshida where the base field is one
dimensional
%\footnote{%AMS Subject Classification: }.
\end{abstract}
\keywords{Bloch-Ogus complex, Generalised reciprocity map, Higher
local fields, Curves over local fields.} \subjclass{11G25, 14H25}

\maketitle
\section{Introduction}

Let $k_{1}$ be a local field with finite residue field and let $X$
be a proper smooth geometrically irreducible curve over $k_{1}.$ To
study the fundamental group $\pi _{1}^{ab}\left( X\right) $, Saito
in [8], introduced
the groups $SK_{1}\left( X\right) $ and $V(X)$ and construct the maps $%
\sigma :SK_{1}\left( X\right) \longrightarrow \pi _{1}^{ab}\left(
X\right) $
and $\tau $ $:$ $V(X)$ $\longrightarrow \pi _{1}^{ab}\left( X\right) ^{g%
\acute{e}o}$ where $\pi _{1}^{ab}\left( X\right) ^{g\acute{e}o}$ is
defined by the exact sequence

\begin{equation*}
0\longrightarrow \pi _{1}^{ab}\left( X\right)
^{g\acute{e}o}\longrightarrow
\pi _{1}^{ab}\left( X\right) \longrightarrow Gal(k_{1}^{ab}/k_{1})\mathbb{%
\longrightarrow }02
\end{equation*}

\noindent The most important results in this context are:
\begin{enumerate}
    \item[1)]The quotient of $\pi _{1}^{ab}\left( X\right) $ by the closure
of the
image of $\sigma $ and the cokernel of $\tau $ are both isomorphic to $%
\widehat{\mathbb{Z}}^{r}$ where $r$ is the rank of the curve.
    \item[2)]For this integer $r$, there is an exact sequence
\end{enumerate}

\begin{equation}
0\longrightarrow \left( \mathbb{Q}/\mathbb{Z}\right)
^{r}\longrightarrow H^{3}\left( K,\mathbb{Q}/\mathbb{Z}\left(
2\right) \right) \longrightarrow
\underset{v\in P}{\oplus }\mathbb{Q}/\mathbb{Z\longrightarrow Q}/\mathbb{%
Z\longrightarrow }0  \tag{1.2}
\end{equation}

where $K=K\left( X\right) $ is\ the function field of $X$ and $P$
designates the set of closed points of $X$.

These results are obtained by Saito in [8] generalizing the
previous work of Bloch where he is reduced to the good reduction
case [8, Introduction]. The method of Saito depends on class field
theory for two-dimensional local ring having finite residue field.
He shows these results for general curve except for the $p$
-primary part in char$k=p>0$ case [8,Section II-4]. The remaining
$p$ -primary part had been proved by Yoshida in [11].

There is another direction for proving these results pointed out by
Douai in [3]. It consists to consider for all $l$ prime to the
residual characteristic, the group $Co\ker \sigma $ as the dual of
the group $W_{0}$
of the monodromy weight filtration of $H^{1}(\overline{X},\mathbb{Q}_{\ell }/%
\mathbb{Z}_{\ell })$
\begin{equation*}
H^{1}(\overline{X},\mathbb{Q}_{\ell }/\mathbb{Z}_{\ell
})=W_{2}\supseteq W_{1}\supseteq W_{0}\supseteq 0
\end{equation*}%
where $\overline{X}2=X\otimes _{k_{1}}\overline{k_{1}}$ and
$\overline{k_{1}}$ is an algebraic closure of $k_{1}$. This allow
him to extend the precedent results to projective smooth surfaces
[3].

The aim of this paper is to use a combination of this approach and
the theory of the monodromy-weight filtration of degenerating
abelian varieties on local fields explained by Yoshida in his paper
[11], to study curves over two-dimensional local fields (section 3).

Let $X$ \ be a projective smooth curve defined over two dimensional
local field $k$. Let $K$ \ be its function field and $P$ denotes the
set of closed points of $X.$ For each $\ v\in P$, $k\left( v\right)
$ denotes the residue field at $v\in P. $A finite etale covering $\
Z\rightarrow X$ \ of $X$ is called a c.s covering, if for any closed
point $x$ of $X$, $x\times _{X}Z$ \ is isomorphic to a finite sum of
$\ x$. We denote by $\pi _{1}^{c.s}\left( X\right) $ the quotient
group of $\pi _{1}^{ab}\left( X\right) $ which classifies abelian
c.s coverings of $X$.

\ To study the class field theory of the curve $X$, we construct the
generalized reciprocity map

\begin{equation*}
\sigma /\ell :SK_{2}\left( X\right) /\ell \longrightarrow \pi
_{1}^{ab}\left( X\right) /\ell
\end{equation*}%
where $SK_{2}\left( X\right) /\ell =Co\ker \left\{ K_{3}\left(
K\right)
/\ell \overset{\oplus \partial _{v}}{\longrightarrow }\underset{v\in P}{%
\oplus }K_{2}\left( k\left( v\right) \right) /\ell \right\} $ and $\tau /l$ $%
:$ $V(X)/\ell $ $\longrightarrow \pi _{1}^{ab}\left( X\right) ^{g\acute{e}%
o}/\ell $ for all $\ell $ prime to residual characteristic. The
group $V(X)$ is defined to be the kernel of the norm map
$N:SK_{2}\left( X\right)
\longrightarrow K_{2}(k)$ induced by the norm map $N_{k(v)/k^{x}}:K_{2}%
\left( k(v)\right) \longrightarrow K_{2}(k)$ for all $v$ and $\pi
_{1}^{ab}\left( X\right) ^{g\acute{e}o}$ by the exact sequence

\begin{equation*}
0\longrightarrow \pi _{1}^{ab}\left( X\right)
^{g\acute{e}o}\longrightarrow
\pi _{1}^{ab}\left( X\right) \longrightarrow Gal(k^{ab}/k)\mathbb{%
\longrightarrow }0
\end{equation*}
The cokernel of $\sigma /\ell $ is the quotient group of $\ \pi
_{1}^{ab}\left( X\right) /\ell $ \ that classifies completely split
coverings of $X$ \ ; that is ; $\pi _{1}^{c.s}\left( X\right) /\ell
$.

We begin by proving the exactness of the Kato-Saito sequence
(Proposition 4.2)~:
$$
\begin{array}{lcr}
0\longrightarrow \pi _{1}^{c.s}\left( X\right) /\ell &\longrightarrow &
 H^{4}\left( K,\mathbb{Z}/\mathbb{\ell }\left( 3\right) \right)\qquad\qquad\qquad\\
&\longrightarrow &\underset{v\in P}{\oplus }H^{3}\left( k\left( v\right) ,%
\mathbb{Z}/\mathbb{\ell }\left( 2\right) \right) \longrightarrow \mathbb{Z}/%
\mathbb{\ell \longrightarrow }0
\end{array}
$$
To determinate the group $\pi _{1}^{c.s}\left( X\right) /\ell $, we
need to consider a semi stable model of the curve $X$ ( see Section
5 ) and the weight filtration on its special fiber. In fact, we will
prove in (Proposition 5.1) that $\pi _{1}^{c.s}\left( X\right) \otimes
\mathbb{Q}_{\ell }$ admits a quotient of type $\mathbb{Q}_{l}^{r}$
where $r$ is the rank of the first crane of this filtration.

Now, to investigate the group $\pi _{1}^{ab}\left( X\right)
^{g\acute{e}o},$ we use class field theory of two-dimensional local
field and prove the vanishing of the group $H^{2}\left(
k,\mathbb{Q}/\mathbb{Z}\right) $ (theorem 3.1 ). This yields the
isomorphism

\begin{equation*}
\pi _{1}^{ab}\left( X\right) ^{g\acute{e}o}\simeq \pi
_{1}^{ab}\left( \overline{X}\right) _{G_{k}}
\end{equation*}

Finally, by the Grothendick weight filtration on the group $\pi
_{1}^{ab}\left( \overline{X}\right) _{G_{k}}$ and assuming the
semi-stable reduction, we obtain the structure of the group $\pi
_{1}^{ab}\left(
X\right) ^{g\acute{e}o}$ and information about the map $\tau $ $:$ $V(X)$ $%
\longrightarrow \pi _{1}^{ab}\left( X\right) ^{g\acute{e}o}.$

Our paper is organized as follows. Section 2 is devoted to some
notations. Section 3 contains the proprieties which we need
concerning two-dimensional local field: duality and the vanishing of
the second cohomology group. In section 4, we construct the
generalized reciprocity map and study the Bloch-Ogus complex
associated to $X.$ In section 5, we investigate the group $\pi
_{1}^{c.s}\left( X\right) .$

\section{Notations}

For an abelian group $M$, and a positive integer $n\geq 1,M/n$
denotes the group $M/nM.$

For a scheme $Z,$ and a sheaf $\mathcal{F}$ over the \'{e}tale site of $Z,$ $%
H^{i}\left( Z,\mathcal{F}\right) $ denotes the i-th \'{e}tale
cohomology group. The group $H^{1}\left( Z,\mathbb{Z}/\ell \right) $
is identified with the group of all continues homomorphisms
$\pi_{1}^{ab}\left( Z\right) \longrightarrow \mathbb{Z}/\ell $. If
$\ell$ is invertible on,$\mathbb{Z}/\ell (1)$ denotes the sheaf of
$l$-th root of unity and for any integer $i,$we denote
$\mathbb{Z}/\ell \left( i\right) =\left( \mathbb{\ Z}/\ell \left(
1\right) \right) ^{\otimes i}$

For a field $L$, $K_{i}\left( L\right) $ is the i-th Milnor group.
It
coincides with the $i-$th Quillen group for $i\leq 2.$ For $\ell $ prime to $%
char$ $L$, there is a Galois symbol
\begin{equation*}
h_{\ell ,L}^{i}\,\,\,\,K_{i}L/\ell \longrightarrow H^{i}(L,\mathbb{\
Z}/\ell \left( i\right) )
\end{equation*}%
which is an isomorphism for $i=0,1,2$ ($i=2$ is Merkur'jev-Suslin).

\section{On two-dimensional local field}

A local field $k$ is said to be $n-$dimensional\textit{\ local} if
there exists the following sequence of fields $k_{i}~\left( 1\leq
i\leq n\right) $ such that

\noindent(i) each $k_{i}$ is a complete discrete valuation field having $%
k_{i-1}$ as the residue field of the valuation ring $O_{k_{i}}$ of
$k_{i},$ and

\noindent(ii) $k_{0}$ is a finite field.

For such a field, and for $\ell $ prime to Char($k$), the well-known
isomorphism
\begin{eqnarray}
H^{n+1}\left( k,\mathbb{Z}/\ell \left( n\right) \right) \simeq \mathbb{Z}%
/\ell \qquad \hfill{(3.1)}
\end{eqnarray}%
and for each $i\in \{0,...,n+1\}$ a perfect duality

\begin{equation}
H^{i}( k,\mathbb{Z}/\ell ( j) ) \times H^{n+1-i}(
k,\mathbb{Z}/\ell (n-j) \longrightarrow H^{n+1}( k,\mathbb{Z}%
/\ell ( n) ) \simeq \mathbb{ Z}/\ell  \hfill{(3.2)}
\end{equation}

hold. 8 The class field theory for such fields is summarized as
follows: There is a map $h:$ $K_{2}\left( k\right) $
$\longrightarrow Gal(k^{ab}/k)$ which generalizes the classical
reciprocity map for usually local fields. This map induces an
isomorphism $K_{2}\left( k\right) /N_{L/k}K_{2}\left( L\right)
\simeq Gal(L/k)$ for each finite abelian extension $L$ of $k.$
Furthermore, the canonical pairing

\begin{equation}
H^{1}\left( k,\mathbb{Q}_{l}/\mathbb{Z}_{l}\right) \times
K_{2}(k)\longrightarrow H^{3}\left(
k,\mathbb{Q}_{l}/\mathbb{Z}_{l}\left( 2\right) \right) \simeq
\mathbb{Q}_{l}/\mathbb{Z}_{l}  \tag{3.3}
\end{equation}

induces an injective homomorphism

\begin{equation}
H^{1}\left( k,\mathbb{Q}_{l}/\mathbb{Z}_{l}\right) \longrightarrow
Hom(K_{2}(k),\mathbb{Q}_{l}/\mathbb{Z}_{l})  \tag{3.4}
\end{equation}

It is well-known that the group $H^{2}\left(
M,\mathbb{Q}/\mathbb{Z}\right) $ vanishes when $M$ is a finite field
or usually local field. Next, we prove the same result for
two-dimensional local field

\begin{thm}
If $k$ is a two-dimensional local field of characteristic zero, then
the group $H^{2}\left( k,\mathbb{Q}/\mathbb{Z}\right) $ vanishes.
\end{thm}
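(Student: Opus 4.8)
The plan is to reduce the statement, one prime at a time, to a single cohomological vanishing coming out of the duality of Section~3. Decompose $\mathbb{Q}/\mathbb{Z}=\bigoplus_{\ell}\mathbb{Q}_\ell/\mathbb{Z}_\ell$, so that it suffices to prove $H^2(k,\mathbb{Q}_\ell/\mathbb{Z}_\ell)=0$ for every prime $\ell$. The hypothesis $\mathrm{char}\,k=0$ is exactly what makes this work: every $\ell$ is then prime to $\mathrm{char}\,k$, so $(3.1)$, the perfect duality $(3.2)$ and the reciprocity statements $(3.3)$--$(3.4)$ are available uniformly in $\ell$, including $\ell=p$. Writing $H^2(k,\mathbb{Q}_\ell/\mathbb{Z}_\ell)=\varinjlim_m H^2(k,\mathbb{Z}/\ell^m)$ and applying $(3.2)$ in dimension $2$ with $i=2$, $j=0$ gives a perfect pairing $H^2(k,\mathbb{Z}/\ell^m)\times H^1(k,\mathbb{Z}/\ell^m(2))\to\mathbb{Z}/\ell^m$. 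Passing to the limit --- Pontryagin duality between the discrete torsion group and the profinite group, the relevant $\varprojlim^1$ vanishing because the intervening $H^0$-groups are finite --- yields
\begin{equation*}
H^2(k,\mathbb{Q}_\ell/\mathbb{Z}_\ell)\;\cong\;\mathrm{Hom}\bigl(H^1(k,\mathbb{Z}_\ell(2)),\,\mathbb{Q}_\ell/\mathbb{Z}_\ell\bigr),\qquad H^1(k,\mathbb{Z}_\ell(2)):=\varprojlim_m H^1(k,\mathbb{Z}/\ell^m(2)).
\end{equation*}
So the theorem amounts to proving $H^1(k,\mathbb{Z}_\ell(2))=0$ for all $\ell$.

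For this I would use the Kummer-type sequence $0\to\mathbb{Z}/\ell^m(2)\to\mathbb{Q}_\ell/\mathbb{Z}_\ell(2)\xrightarrow{\ell^m}\mathbb{Q}_\ell/\mathbb{Z}_\ell(2)\to 0$, which gives $0\to H^0(k,\mathbb{Q}_\ell/\mathbb{Z}_\ell(2))/\ell^m\to H^1(k,\mathbb{Z}/\ell^m(2))\to H^1(k,\mathbb{Q}_\ell/\mathbb{Z}_\ell(2))[\ell^m]\to 0$. The torsion subgroup of $k^\times$ is finite, hence so are $\mu_{\ell^\infty}(k)$ and $H^0(k,\mathbb{Q}_\ell/\mathbb{Z}_\ell(2))$; taking the inverse limit over $m$ reduces us to showing $H^0(k,\mathbb{Q}_\ell/\mathbb{Z}_\ell(2))=0$ and that $H^1(k,\mathbb{Q}_\ell/\mathbb{Z}_\ell(2))$ has no nonzero divisible part, i.e.\ is finite. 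The finiteness I would aim to extract from an Euler--Poincar\'e characteristic computation for the two-dimensional local field $k$: all $H^i(k,\mathbb{Z}/\ell^m(j))$ are finite, $(3.1)$ gives $H^3(k,\mathbb{Z}/\ell^m(2))\cong\mathbb{Z}/\ell^m$, and $(3.2)$ identifies $H^2(k,\mathbb{Z}/\ell^m(2))$ with the dual of $H^1(k,\mathbb{Z}/\ell^m)$ (the mod-$\ell^m$ character group of $k$), so the orders of $H^0,\dots,H^3$ with $\mathbb{Z}/\ell^m(2)$-coefficients are determined by $\mu_{\ell^m}(k)$ and by the structure of $\pi_1^{ab}(\mathrm{Spec}\,k)$ furnished by higher local class field theory (Kato: up to finite groups, $\pi_1^{ab}(\mathrm{Spec}\,k)$ is the completion of $K_2(k)$).

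The genuinely delicate point --- what I expect to be the main obstacle --- is this last count: pinning down the $\ell$-divisible part of $H^1(k,\mathbb{Q}_\ell/\mathbb{Z}_\ell(2))$, equivalently the free rank of $H^1(k,\mathbb{Z}_\ell(2))$. This forces one to invoke the divisibility structure of $K_2$ of a two-dimensional local field (its tame and unit filtration, and the fact that the reciprocity map of $(3.4)$ sees it), and to check that nothing is lost at the prime $\ell=p$. A parallel and more hands-on route runs into the same difficulty: with $k_1$ chosen in the defining tower, so that $k$ is complete discretely valued with residue field the one-dimensional local field $k_1$, one runs the Hochschild--Serre spectral sequence for $1\to I\to G_k\to G_{k_1}\to 1$ ($I$ the inertia), feeds in $H^2(k_1,\mathbb{Q}/\mathbb{Z})=0$, $\mathrm{cd}(k_1)=2$, $\mathrm{cd}(I)=1$, and is left with the term $H^1\bigl(k_1,\,H^1(I,\mathbb{Q}/\mathbb{Z})\bigr)$ --- a twisted $H^1$ over the one-dimensional field $k_1$ --- whose vanishing is again the crux.
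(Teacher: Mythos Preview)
Your route is not the paper's, and it stops short of a proof. The paper argues via the Bockstein, following Serre's proof of the one-dimensional case: from $0\to\mathbb{Z}/\ell\to\mathbb{Q}_\ell/\mathbb{Z}_\ell\xrightarrow{\ell}\mathbb{Q}_\ell/\mathbb{Z}_\ell\to 0$, the $\ell$-primary torsion group $H^2(k,\mathbb{Q}_\ell/\mathbb{Z}_\ell)$ vanishes exactly when $\delta:H^1(k,\mathbb{Q}_\ell/\mathbb{Z}_\ell)\to H^2(k,\mathbb{Z}/\ell)$ is surjective. After reducing to the case $\mu_\ell\subset k$, the paper asserts $H^2(k,\mathbb{Z}/\ell)\cong H^2(k,\mu_\ell)\cong\mathbb{Z}/\ell$, so that surjectivity becomes simply $\delta\neq 0$; it then exhibits a class with nonzero Bockstein by noting that $\delta(\chi)=0$ iff $\chi\in\ell\cdot H^1$, and producing via the injection $(3.4)$ a character arising from a primitive root of unity in $k^\times$ that is not $\ell$-divisible in $H^1$. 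No inverse limits, no Euler characteristic computation, no Hochschild--Serre.

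Your duality reduction to $H^1(k,\mathbb{Z}_\ell(2))=0$ is a legitimate reformulation, but the argument you sketch for it has a concrete problem beyond the incompleteness you acknowledge. You say the inverse limit of $0\to H^0/\ell^m\to H^1(k,\mathbb{Z}/\ell^m(2))\to H^1[\ell^m]\to 0$ reduces you to showing $H^0(k,\mathbb{Q}_\ell/\mathbb{Z}_\ell(2))=0$; but this group is nonzero whenever $\mu_\ell\subset k$ (the $\ell$-torsion of $\mathbb{Q}_\ell/\mathbb{Z}_\ell(2)$ is then Galois-fixed), and the transition maps on the left-hand terms are the natural projections, so $\varprojlim_m H^0/\ell^m$ is the $\ell$-adic completion of the finite group $H^0$, i.e.\ $H^0$ itself rather than zero. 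So that sub-goal cannot be met as written, and neither your Euler--Poincar\'e count nor the Hochschild--Serre alternative you sketch dissolves this term --- both end precisely at the computation you flag as the obstacle. The paper's Bockstein-plus-reciprocity argument is of a different shape and bypasses this difficulty entirely.
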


\begin{proof}
We proceed as in the proof of theorem 4 of [10]. It is enough to prove that $%
H^{2}\left( k,\mathbb{Q}_{l}/\mathbb{Z}_{l}\right) $ vanishes for
all $l$ and when $k$ contains the group $\mathbb{\mu }_{l}$ of
$l$-th roots of unity. For this, we prove that multiplication by $l$
is injective. That is, we have to show that the coboundary map

\begin{equation*}
H^{1}\left( k,\mathbb{Q}_{l}/\mathbb{Z}_{l}\right) \overset{\delta }{%
\longrightarrow }H^{2}\left( k,\mathbb{Z}/l\mathbb{Z}\right)
\end{equation*}%
is injective.

By assumption on $k$, we have

\begin{equation*}
H^{2}\left( k,\mathbb{Z}/l\mathbb{Z}\right) \simeq H^{2}\left(
k,\mathbb{\mu }_{l}\right) \simeq \mathbb{Z}/\ell
\end{equation*}

The last isomorphism is well-known for one-dimensional local field
and was generalized to non archimedian and locally compact fields by
Shatz in [6]. The proof is now reduced to the fact that $\delta \neq
0;$

By class field theory of two dimensional local field, the cohomology group
$H^{1}\left( k,\mathbb{Q}_{l}/\mathbb{Z}_{l}\right) $ may be
identified with the group of
continuous homomorphisms $K_{2}(k)\overset{\Phi }{\longrightarrow }\mathbb{Q}%
_{l}/\mathbb{Z}_{l}$.

Now, $\delta (\Phi )=0$ if and only if \ $\Phi $ is a $l-$th power, and $%
\Phi $ is a $l-$th power if and only if $\Phi $ is trivial on $\mathbb{\mu }%
_{l}$. Thus, it is sufficient to construct an homomorphism $%
K_{2}(k)\longrightarrow \mathbb{Q}_{l}/\mathbb{Z}_{l}$ which is non
trivial on8 $\mathbb{\mu }_{l}.$

Let $i$ be the maximal natural number such that $k$ contains a primitive $%
l^{i}-$th root of unity. Then, the image $\xi $ of a primitive
$l^{i}-$th root of unity under the composite map

\begin{equation*}
k^{x}/k^{xl}\simeq H^{1}\left( k,\mathbb{\mu }_{l}\right) \simeq
H^{1}\left(
k,\mathbb{Z}/l\mathbb{Z}\right) \longrightarrow H^{1}\left( k,\mathbb{Q}_{l}/%
\mathbb{Z}_{l}\right)
\end{equation*}

is not zero. Thus, the injectivity of the map

\begin{equation*}
H^{1}\left( k,\mathbb{Q}_{l}/\mathbb{Z}_{l}\right) \longrightarrow
Hom(K_{2}(k),\mathbb{Q}_{l}/\mathbb{Z}_{l})
\end{equation*}

gives rise to a character which is non trivial on $\mathbb{\mu
}_{l}.$
\end{proof}

\section{Curves over two dimensional local field}

Let $k$ be a two dimensional local field of characteristic zero and
$X$ a smooth projective curve defined over $k.$

\noindent We recall that we deno8te:

\noindent $K=K\left( X\right) $ \ its function field,

\noindent $P:$ set of closed points of $X$, and for $v\in P,$

$k\left( v\right) :$ the residue field at $v\in P$

The residue field of $k$ is one-dimensi6nal local field. It is denoted by $%
k_{1}$

\noindent Let $\mathcal{H}^{n}\left( \mathbb{Z}/\ell \left( 3\right)
\right) $ $,n\geq 1$, the Zariskien sheaf associated to the presheaf
$\ U\longrightarrow H^{n}\left( U,\mathbb{Z}/\ell \left( 3\right)
\right) $. Its cohomology is calculated by the Bloch-Ogus
resolution. So, we have the two exact sequences:

\begin{equation}
H^{3}\left( K,\mathbb{Z}/\ell\left( 3\right) \right) \longrightarrow
\underset{v\in P}{\oplus}H^{2}\left( k\left( v\right)
,\mathbb{Z}/\ell\left(
2\right) \right) \longrightarrow H^{1}\left( X_{Zar},\mathcal{H}^{3}(\mathbb{%
Z}/\ell\left( 3\right) \right) )\longrightarrow 0 % \tag{4.1}
\end{equation}

\begin{equation}
0\longrightarrow H^{0}( X_{Zar},\mathcal{H}^{4}(\mathbb{Z}/\ell(3))
)\longrightarrow H^{4}( K,\mathbb{Z}/\ell( 3) )
\longrightarrow\underset{v\in P}{\oplus}H^{3}( k(v,\mathbb{Z}/\ell(2) )) % \tag{4.2}
\end{equation}

\subsection{The reciprocity map}

We introduce the group $SK_{2}\left( X\right) /\ell :$

\begin{equation*}
SK_{2}\left( X\right) /\ell=Co\ker\left\{ K_{3}\left( K\right) /\ell\overset{%
\oplus\partial_{v}}{\longrightarrow}\underset{v\in P}{\oplus
}K_{2}\left( k\left( v\right) \right) /\ell\right\}
\end{equation*}

\noindent where $\partial _{v}:K_{3}\left( K\right) \longrightarrow
K_{2}\left( k\left( v\right) \right) $ \ is the boundary map in
K-Theory. It will play an important role in class field theory for
$X$ as pointed out by Saito in the introduction of [8]. In this
section, we construct a map

\begin{equation*}
\sigma/\ell:SK_{2}\left( X\right)
/\ell\longrightarrow\pi_{1}^{ab}\left( X\right) /\ell
\end{equation*}

\noindent which describe the class field theory of $X$.

By definition of $SK_{2}\left( X\right) /\ell ,$ we have the exact
sequence

\begin{equation*}
K_{3}\left( K\right) /\ell\longrightarrow\underset{v\in P}{\oplus}%
K_{2}\left( k\left( v\right) \right) /\ell\longrightarrow
SK_{2}\left( X\right) /\ell\longrightarrow0
\end{equation*}

On the other hand, it is known that the following diagram is
commutative:

\begin{equation*}
\begin{array}{ccc}
K_{3}\left( K\right) /\ell & \longrightarrow\underset{v\in
P}{\oplus} &
K_{2}\left( k\left( v\right) \right) /\ell \\
\downarrow h^{3} &  & \downarrow h^{2} \\
H^{3}\left( K,\mathbb{Z}/\ell\left( 3\right) \right) &
\longrightarrow
\underset{v\in P}{\oplus} & H^{2}\left( k\left( v\right) ,\mathbb{Z}%
/\ell\left( 2\right) \right)%
\end{array}%
%\QQfntext{-1}{ Key words: reciprocity map, Higher local fields,
%Classfield theory.} \QQfntext{1}{ AMS Subject Classification: 11G25,
%14H25.}
\end{equation*}

\noindent where $h^{2},h^{3}$ are the Galois symbols. This yields
the existence of a morphism

\begin{equation*}
h:SK_{2}\left( X\right) /\ell\longrightarrow H^{1}\left( X_{Zar},\mathcal{H}%
^{3}(\mathbb{Z}/\ell\left( 2\right) \right) )
\end{equation*}

\noindent taking in account the exact sequence (4.1). This morphism
fit in the following commutative diagram

\begin{equation*}
\begin{array}{ccccccc}
0\longrightarrow & K_{3}\left( K\right) /\ell & \longrightarrow \,\underset{%
v\in P}{\oplus } & K_{2}\left( k\left( v\right) \right) /\ell &
\rightarrow
& \,SK_{2}(X)/\ell & \longrightarrow 0 \\
& \downarrow h^{3} &  & \downarrow h^{2} &  & \downarrow h &  \\
0\longrightarrow & H^{3}\left( K,\mathbb{Z}/\ell \left( 2\right)
\right) &
\longrightarrow \underset{v\in P}{\oplus } & H^{2}\left( k\left( v\right) ,%
\mathbb{Z}/\ell \left( 2\right) \right) & \rightarrow & H^{1}\left( X_{Zar},%
\mathcal{H}^{3}(\mathbb{Z}/\ell \left( 2\right) \right) ) & \longrightarrow 0%
\end{array}%
\end{equation*}%
\noindent

\noindent

By Merkur'jev-Suslin, the map $h^{2}$ is an isomorphism, which imply
that $h$ is surjective. On the other hand the spectral sequence

\begin{equation*}
H^{p}\left( X_{Zar},\mathcal{H}^{q}(\mathbb{Z}/\ell\left( 3\right)
\right) )\Rightarrow H^{p+q}(X,\mathbb{Z}/\ell\left( 3\right) )
\end{equation*}

\noindent induces the exact sequence

\begin{align}
0 & \longrightarrow H^{1}\left( X_{Zar},\mathcal{H}^{3}(\mathbb{Z}%
/\ell\left( 3\right) \right) )\overset{e}{\longrightarrow}H^{4}(X,\mathbb{Z}%
/\ell\left( 3\right) )  \tag{4.3} \\
& \longrightarrow H^{0}\left(
X_{Zar},\mathcal{H}^{4}(\mathbb{Z}/\ell\left(
3\right) \right) )\longrightarrow H^{2}\left( X_{Zar},\mathcal{H}^{3}(%
\mathbb{Z}/\ell\left( 3\right) \right) )=0  \notag
\end{align}

\noindent Composing $h$ and $e$, we get the map

\begin{equation*}
SK_{2}\left( X\right) /\ell\longrightarrow
H^{4}(X,\mathbb{Z}/\ell\left( 3\right) )
\end{equation*}

\noindent

\noindent Finally the group $H^{4}(X,\mathbb{Z}/\ell \left( 3\right)
)$ \ is identified to the group $\pi _{1}^{ab}\left( X\right) /\ell
$ by the duality [4,II, th 2.1]

\begin{equation*}
H^{4}(X,\mathbb{Z}/\ell \left( 3\right) )\otimes
H^{1}(X,\mathbb{Z}/\ell
)\longrightarrow H^{5}(X,\mathbb{Z}/\ell \left( 3\right) )\simeq H^{3}(k,%
\mathbb{Z}/\ell \left( 2\right) )\simeq \mathbb{Z}/\ell  %\tag{4.4}
\end{equation*}
Hence, we obtain the map

\begin{equation*}
\sigma/\ell:SK_{2}\left( X\right)
/\ell\longrightarrow\pi_{1}^{ab}\left( X\right) /\ell
\end{equation*}

\noindent

\begin{rem}
By the exact sequence (4.2) the group $H^{0}\left( X_{Zar},\mathcal{H}^{4}(%
\mathbb{Z}/\ell \left( 3\right) \right) )$ \ coincides with the
kernel of the map2
\end{rem}

\begin{equation*}
H^{4}(K,\mathbb{Z}/\ell\left( 3\right) )\longrightarrow\underset{v\in P}{%
\oplus}H^{3}\left( k\left( v\right) ,\mathbb{Z}/\ell\left( 2\right)
\right)
\end{equation*}

\noindent {\textit{and by localization in \'{e}tale cohomology}}

\begin{equation*}
\underset{v\in P}{\oplus }\mathit{H}^{2}\left( k\left( v\right) ,\mathbb{Z}%
/\ell \left( 2\right) \right) \mathit{\longrightarrow H}^{4}\left( X,\mathbb{%
Z}/\ell \left( 3\right) \right) \mathit{\longrightarrow H}^{4}\left( K,%
\mathbb{Z}/\ell \left( 3\right) \right) \underset{v\in
P}{\longrightarrow \oplus }\mathit{H}^{3}\left( k\left( v\right)
,\mathbb{Z}/\ell \left( 2\right) \right)
\end{equation*}

\textit{\noindent and taking in account (4.3), we see that
}$H^{1}\left( X_{Zar},\mathcal{H}^{4}(\mathbb{Z}/\ell \left(
3\right) \right) )$\textit{\ \ is the cokernel of the Gysin map}

\begin{equation*}
\underset{v\in P}{\oplus }\mathit{H}^{2}\left( k\left( v\right) ,\mathbb{Z}%
/\ell \left( 2\right) \right) \overset{g}{\longrightarrow }\mathit{H}%
^{4}\left( X,\mathbb{Z}/\ell \left( 3\right) \right)
\end{equation*}

\textit{\noindent and consequently the morphism }$g$\textit{\
factorize through\ }$H^{1}\left(
X_{Zar},\mathcal{H}^{4}(\mathbb{Z}/\ell \left( 3\right) \right) )$

\begin{equation*}
\begin{array}{ccc}
\underset{v\in P}{\oplus }\mathit{H}^{2}\left( k\left( v\right) ,\mathbb{Z}%
/\ell \left( 2\right) \right) & \overset{g}{\longrightarrow } & \mathit{H}%
^{4}\left( X,\mathbb{Z}/\ell \left( 3\right) \right) \\
\mathit{\searrow } &  & \mathit{\nearrow } \\
& \mathit{H}^{1}\left( X_{Zar},\mathcal{H}^{4}(\mathbb{Z}/\ell
\left( 3\right) \right) \mathit{)} &
\end{array}%
\end{equation*}

\textit{\noindent Then, we deduce the following commutative diagram}

\begin{equation*}
\begin{array}{ccccc}
K_{3}\left( K\right) /\ell & \rightarrow \underset{v\in P}{\oplus }
& K_{2}(k\left( v\right) )/\ell \, & \rightarrow & SK_{2}\left(
X\right) /\ell
\longrightarrow 0 \\
\downarrow h^{3} &  & \downarrow h^{2} &  & \downarrow h \\
H^{3}\left( K,\mathbb{Z}/\ell \left( 3\right) \right) & \rightarrow \underset%
{v\in P}{\oplus } & H^{2}\left( k\left( v\right) ,\mathbb{Z}/\ell
\left(
2\right) \right) & \rightarrow & H^{1}\left( X_{Zar},\mathcal{H}^{4}(\mathbb{%
Z}/\ell \left( 3\right) \right) )\longrightarrow 0 \\
&  & \downarrow g & \swarrow e &  \\
&  & \pi _{1}^{ab}\left( X\right) /l=H^{4}\left( X,\mathbb{Z}/\ell
\left( 3\right) \right) &  &
\end{array}%
\end{equation*}%
\textit{The surjectivity of the map }$h$\textit{\ implies that the
cokernel of }$\ \ \ \ \ \ \ \ \ \ \ \ \ \ \ \ \ \ \ \ \ \ \ \ \ \ \
\ \ \ \ \ \ \ \ \
\ \ \ \ $%
\begin{equation*}
\sigma /\ell :SK_{2}\left( X\right) /\ell \longrightarrow \pi
_{1}^{ab}\left( X\right) /\ell
\end{equation*}%
$\ $\textit{coincides with the cokernel of }$e$\textit{\ which is }$%
H^{0}\left( X_{Zar},\mathcal{H}^{4}(\mathbb{Z}/\ell \left( 3\right)
\right) ).$\textit{\ Hence }$Co\ker \sigma /\ell $\textit{\ is the
dual of the
kernel of the map }%

\begin{equation}
H^{1}\left( X,\mathbb{Z}/\ell \right) \longrightarrow
\mathop{\displaystyle \prod }\limits_{v\in P}H^{1}\left( k\left(
v\right) ,\mathbb{Z}/\ell \right) \tag{4.4}
\end{equation}%

\subsection{The Kato-Saito exact sequence}

\begin{defn}
Let $Z$ be a Noetherian scheme. A finite etale covering $f: W
\rightarrow Z$ is called a c.s covering if for any closed point
$z$ of $Z$ , $z\times _{Z}W$ is isomorphic to a finite
scheme-theoretic sum of copies of $z$ We denote $\pi
_{1}^{c.s}\left( Z\right) $ the quotient group of $\pi
_{1}^{ab}\left( Z\right) $  which classifies abelian c.s coverings
of $Z.$
\end{defn}

\bigskip

Hence, the group $\pi _{1}^{c.s}\left( X\right) /\ell $ is the dual
of the kernel of the map
\begin{equation}
H^{1}\left(
X,\mathbb{Z}/\ell \right) \longrightarrow \mathop{\displaystyle
\prod }\limits_{v\in P}H^{1}\left( k\left( v\right) ,\mathbb{Z}/\ell
\right) \tag{4.4}
\end{equation}
as in [8, section 2, definition and sentence just below]. Now, we
are able to calculate the homologies of the Bloch-Ogus complex
associated $\ X.$

\noindent\ \ \ \ \ Generalizing [9,Theorem7], we obtain~:
\begin{prop}
Let $X$ be a projective smooth curve defined over
$k$
Then for all $\ell $, we have the following exact sequence
$$
\begin{array}{lcr}
0\longrightarrow \pi _{1}^{c.s}\left( X\right) /\ell &\longrightarrow &
 H^{4}\left( K,\mathbb{Z}/\mathbb{\ell }\left( 3\right) \right)\qquad\qquad\qquad\\
&\longrightarrow &\underset{v\in P}{\oplus }H^{3}\left( k\left( v\right) ,%
\mathbb{Z}/\mathbb{\ell }\left( 2\right) \right) \longrightarrow \mathbb{Z}/%
\mathbb{\ell \longrightarrow }0.
\end{array}
$$

\end{prop}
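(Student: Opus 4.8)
The plan is to obtain the sequence from the Bloch--Ogus resolution of the sheaves $\mathcal{H}^{n}(\mathbb{Z}/\ell(3))$ on the curve $X$, together with the spectral sequence $H^{p}(X_{Zar},\mathcal{H}^{q}(\mathbb{Z}/\ell(3)))\Rightarrow H^{p+q}(X,\mathbb{Z}/\ell(3))$ already used above and the cohomological dimension of $K$. First I would record that the Gersten complex of $\mathcal{H}^{4}(\mathbb{Z}/\ell(3))$ on $X$ is
$$
H^{4}\bigl(K,\mathbb{Z}/\ell(3)\bigr)\ \overset{\oplus\partial_{v}}{\longrightarrow}\ \underset{v\in P}{\oplus}\,H^{3}\bigl(k(v),\mathbb{Z}/\ell(2)\bigr)\ \longrightarrow\ 0,
$$
so that $H^{0}(X_{Zar},\mathcal{H}^{4}(\mathbb{Z}/\ell(3)))=\ker(\oplus\partial_{v})$ and $H^{1}(X_{Zar},\mathcal{H}^{4}(\mathbb{Z}/\ell(3)))=\mathrm{coker}(\oplus\partial_{v})$. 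Exactness at the first two places of the asserted sequence is then precisely (4.2), once one invokes the computation of \S4.1 identifying $H^{0}(X_{Zar},\mathcal{H}^{4}(\mathbb{Z}/\ell(3)))=\mathrm{coker}(\sigma/\ell)=\mathrm{coker}(e)$ with the dual of the kernel of (4.4), i.e. with $\pi_{1}^{c.s}(X)/\ell$.

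The real work is to compute $\mathrm{coker}(\oplus\partial_{v})=H^{1}(X_{Zar},\mathcal{H}^{4}(\mathbb{Z}/\ell(3)))$. Since $X$ is a curve, $E_{2}^{p,q}=H^{p}(X_{Zar},\mathcal{H}^{q}(\mathbb{Z}/\ell(3)))=0$ for $p\ge 2$, so the spectral sequence degenerates at $E_{2}$ and in total degree $5$ yields
$$
0\longrightarrow H^{1}\bigl(X_{Zar},\mathcal{H}^{4}(\mathbb{Z}/\ell(3))\bigr)\longrightarrow H^{5}\bigl(X,\mathbb{Z}/\ell(3)\bigr)\longrightarrow H^{0}\bigl(X_{Zar},\mathcal{H}^{5}(\mathbb{Z}/\ell(3))\bigr)\longrightarrow 0.
$$
Now $H^{0}(X_{Zar},\mathcal{H}^{5}(\mathbb{Z}/\ell(3)))=\ker\bigl(H^{5}(K,\mathbb{Z}/\ell(3))\to\underset{v\in P}{\oplus}H^{4}(k(v),\mathbb{Z}/\ell(2))\bigr)$, and this vanishes because $k$, being a two--dimensional local field of characteristic zero, has cohomological dimension $\le 3$ for every $\ell$, hence $K$ (the function field of a curve over $k$) has cohomological dimension $\le 4$ and $H^{5}(K,\mathbb{Z}/\ell(3))=0$. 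Thus $H^{1}(X_{Zar},\mathcal{H}^{4}(\mathbb{Z}/\ell(3)))\simeq H^{5}(X,\mathbb{Z}/\ell(3))$, and by the duality $H^{5}(X,\mathbb{Z}/\ell(3))\simeq H^{3}(k,\mathbb{Z}/\ell(2))$ used in \S4.1, together with (3.1), this is $\mathbb{Z}/\ell$. Splicing with (4.2) already produces the complex of the statement, exact except possibly at $\underset{v\in P}{\oplus}H^{3}(k(v),\mathbb{Z}/\ell(2))$.

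It then remains to pin down the last arrow $\underset{v\in P}{\oplus}H^{3}(k(v),\mathbb{Z}/\ell(2))\to\mathbb{Z}/\ell$. Under the identifications above it is the filtration edge map $\underset{v\in P}{\oplus}H^{3}(k(v),\mathbb{Z}/\ell(2))\to H^{5}(X,\mathbb{Z}/\ell(3))$, that is, the sum of the Gysin pushforwards $(i_{v})_{*}$ along the closed immersions $i_{v}\colon v\hookrightarrow X$, followed by the duality isomorphism $H^{5}(X,\mathbb{Z}/\ell(3))\simeq\mathbb{Z}/\ell$, which is the pushforward $\mathrm{tr}_{X/k}\colon H^{5}(X,\mathbb{Z}/\ell(3))\to H^{3}(k,\mathbb{Z}/\ell(2))$ followed by (3.1). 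By functoriality of pushforward, $\mathrm{tr}_{X/k}\circ(i_{v})_{*}$ is the pushforward along $v\to\mathrm{Spec}\,k$, i.e. the corestriction $\mathrm{cores}_{k(v)/k}$; and since the trace maps of higher local class field theory are compatible with the canonical isomorphisms $H^{3}(k(v),\mathbb{Z}/\ell(2))\simeq\mathbb{Z}/\ell\simeq H^{3}(k,\mathbb{Z}/\ell(2))$ of (3.1), each $\mathrm{cores}_{k(v)/k}$ becomes the identity of $\mathbb{Z}/\ell$. Hence the last arrow is the sum map on $\underset{v\in P}{\oplus}\mathbb{Z}/\ell$, it is surjective, and its kernel is the image of $\oplus\partial_{v}$; this gives exactness at the remaining place and finishes the proof. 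The whole argument is a transcription of the proof of [9, Theorem 7], the only genuinely two--dimensional inputs being the duality (3.2) and the bound $\mathrm{cd}(K)\le 4$. The step I expect to be most delicate is this last one: checking that the various trace/pushforward normalisations fit together so that the trailing arrow is literally $\sum_{v}\mathrm{cores}_{k(v)/k}$ (equivalently, the sum map), since this is what makes the identification with the dual of (4.4) and the exactness match up.
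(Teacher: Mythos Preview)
Your argument is correct and amounts to the same proof as the paper's; the only difference is presentational. Rather than splicing (4.2) with the degree-$5$ edge of the coniveau spectral sequence, the paper writes down the localization long exact sequence for $X\supset P$ directly,
\[
\underset{v\in P}{\oplus}H^{2}\bigl(k(v),\mathbb{Z}/\ell(2)\bigr)\overset{g}{\longrightarrow}H^{4}\bigl(X,\mathbb{Z}/\ell(3)\bigr)\longrightarrow H^{4}\bigl(K,\mathbb{Z}/\ell(3)\bigr)\longrightarrow\underset{v\in P}{\oplus}H^{3}\bigl(k(v),\mathbb{Z}/\ell(2)\bigr)\longrightarrow H^{5}\bigl(X,\mathbb{Z}/\ell(3)\bigr)\longrightarrow 0,
\]
and then simply plugs in $\mathrm{coker}(g)=\pi_{1}^{c.s}(X)/\ell$ (from Remark~4.1) and $H^{5}(X,\mathbb{Z}/\ell(3))\simeq\mathbb{Z}/\ell$; your explicit identification of the trailing arrow as $\sum_{v}\mathrm{cores}_{k(v)/k}$, and your justification of the terminal $0$ via $\mathrm{cd}(K)\le 4$, are more than the paper actually writes down.
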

\begin{proof}
Consider the localization sequence on $X$
\begin{equation*}
\begin{array}{c}
\underset{v\in P}{\oplus }H^{2}\left( k\left( v\right)
,\mathbb{Z}/\ell
\left( 2\right) \right) \overset{g}{\longrightarrow }H^{4}\left( X,\mathbb{Z}%
/\ell \left( 3\right) \right) \longrightarrow H^{4}\left(
K,\mathbb{Z}/\ell
\left( 3\right) \right) \\
\longrightarrow \underset{v\in P}{\oplus }H^{3}\left( k\left( v\right) ,%
\mathbb{Z}/\ell \left( 2\right) \right) \longrightarrow H^{5}\left( X,%
\mathbb{Z}/\ell \left( 3\right) \right) \longrightarrow 0%
\end{array}
\end{equation*}
We know that the cokernel of the Gysin map $g$ coincides with
$\pi_{1}^{c.s}\left( X\right) /\ell $ and we use the isomorphism
$H^{5}\left( X,\mathbb{Z}/\ell \left( 3\right) \right) \simeq \mathbb{Z}/\ell $
(4.4).
\end{proof}

\section{The group $\protect\pi _{1}^{c.s}\left( X\right) $}

In his paper [8], Saito don't prove the $p-$ primary part in the char $%
k=p\gtrdot 0$ case. This case was developed by Yoshida in [11]. His
method is based on the theory of monodromy-weight filtration of
degenerating abelian varieties on local fields. In this work, we use
this approach to investigate the group $\pi _{1}^{c.s}\left(
X\right) .$ As mentioned by Yoshida in [11,section 2] Grothendieck's
theory of monodromy-weight filtration on Tate module of abelian
varieties are valid where the residue field is arbitrary perfect
field$\,$

We assume the semi-stable reduction and choose a regular model
$\mathcal{X}$ of $X$ over $SpecO_{k},$ by which we mean a two
dimensional regular scheme with a proper birational morphism
$f:\mathcal{X}$ $\longrightarrow SpecO_{k}$ such that $\mathcal{X}$
$\otimes _{O_{k}}k\simeq X$ and if $\mathcal{X}_{s}$
designates the special fiber $\mathcal{X}$ $\otimes _{O_{k}}k_{1},$ then $Y=(%
\mathcal{X}_{s})_{r\acute{e}d}$ is a curve defined over the residue field $%
k_{1}$ such that any irreducible component of $Y$ is regular and it
has ordinary double points as singularity.

Let $\overline{Y}=Y\otimes _{k_{1}}\overline{k_{1}}$ , where $\overline{k_{1}%
}$ is an algebraic closure of $k_{1}$ and $\overline{Y}^{[p]}=\!\!\!\!\!%
\underset{i_{/}<i_{1}<\cdots <i_{p}}{\mathop{\displaystyle \bigsqcup } }\!\!%
\overline{Y_{i_{/}}}\cap \overline{Y_{i_{1}}}\cap \cdots \cap \overline{%
Y_{i_{p}}}$ $,(\overline{Y})_{i\in I}=$ collection of irreducible
components of $\overline{Y}.$

\bigskip Let $\left\vert \overline{\Gamma }\right\vert $ be a realization of
the dual graph $\overline{\Gamma },$ then the group $H^{1}\left(
\left\vert \overline{\Gamma }\right\vert ,\mathbb{Q}_{l}\right) $
coincides with the group $W_{0}(H^{1}\left(
\overline{Y},\mathbb{Q}_{l}\right) $ $)$ constituted of elements of
weight $0$ for the filtration

\begin{equation*}
H^{1}(\overline{Y},\mathbb{Q}_{\ell })=W_{1}\supseteq W_{0}\supseteq
0
\end{equation*}

of $H^{1}(\overline{Y},\mathbb{Q}_{\ell })~$deduced from the
spectral sequence$\ \ \ $

$\ $%
\begin{equation*}
E_{1}^{p,q}=H^{q}(\overline{Y}^{[p]},\mathbb{Q}_{\ell
})\Longrightarrow H^{p+q}(\overline{Y},\mathbb{Q}_{\ell })
\end{equation*}

For details see [2], [3] and [5]

$\ $

Now, if we assume further that the irreducible components and double
points
of $\overline{Y}$ are defined over $k_{1},$ then the dual graph $\overline{%
\Gamma }$ of $\overline{Y}$ go down to $k_{1}$ and we obtain the
injection

\begin{equation*}
W_{0}(H^{1}\left( \overline{Y},\mathbb{Q}_{l}\right) )\subseteq
H^{1}\left( Y,\mathbb{Q}_{l}\right) \hookrightarrow H^{1}\left(
X,\mathbb{Q}_{l}\right)
\end{equation*}

\begin{prop}
The group $\pi _{1}^{c.s}\left( X\right) \otimes $
$\mathbb{Q}_{l}~$admits a quotient of type
$\mathbb{Q}_{l}^{r},~$where $r$ is the $\mathbb{Q}_{l}-rank$
of \ the group $H^{1}\left( \left\vert \overline{\Gamma }\right\vert ,%
\mathbb{Q}_{l}\right) $
\end{prop}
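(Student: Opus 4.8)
The plan is to combine the duality identification of $\pi_1^{c.s}(X)/\ell$ from Section 4 with the weight-filtration geometry of the special fiber just set up. First I would pass to $\mathbb{Q}_\ell$-coefficients: tensoring the Kato--Saito exact sequence of Proposition 4.2 with $\mathbb{Q}_\ell$ (or, equivalently, taking the inverse limit over powers of $\ell$ of the identification in (4.4) and the diagram preceding it) shows that $\pi_1^{c.s}(X)\otimes\mathbb{Q}_\ell$ is the $\mathbb{Q}_\ell$-dual of the kernel of the restriction map
\begin{equation*}
H^1(X,\mathbb{Q}_\ell)\longrightarrow \mathop{\textstyle\prod}\limits_{v\in P}H^1(k(v),\mathbb{Q}_\ell).
\end{equation*}
So it suffices to exhibit inside that kernel a subspace of dimension $r=\dim_{\mathbb{Q}_\ell}H^1(|\overline{\Gamma}|,\mathbb{Q}_\ell)$; dualizing then gives the desired quotient $\mathbb{Q}_\ell^r$ of $\pi_1^{c.s}(X)\otimes\mathbb{Q}_\ell$.

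Next I would produce that subspace from the chain of inclusions established just before the statement,
\begin{equation*}
W_0\bigl(H^1(\overline{Y},\mathbb{Q}_\ell)\bigr)=H^1(|\overline{\Gamma}|,\mathbb{Q}_\ell)\subseteq H^1(Y,\mathbb{Q}_\ell)\hookrightarrow H^1(X,\mathbb{Q}_\ell),
\end{equation*}
using the hypothesis that the components and double points of $\overline Y$ are rational over $k_1$. The image of this weight-zero part is an $r$-dimensional subspace of $H^1(X,\mathbb{Q}_\ell)$, and the main point is to check it dies in $H^1(k(v),\mathbb{Q}_\ell)$ for every closed point $v$. Here I would use that the specialization/cospecialization maps are compatible with the weight filtration and that the classes in $W_0(H^1(\overline{Y},\mathbb{Q}_\ell))=H^1(|\overline{\Gamma}|,\mathbb{Q}_\ell)$ are pulled back from the dual graph, i.e. from $H^1$ of a wedge of circles: such a class, restricted to any point of $X$ (which lies in the smooth locus of a single component, after spreading out over $O_k$ and reducing), factors through the fundamental group of a point of $Y$, hence through a single vertex of $\overline{\Gamma}$, on which a class coming from $H^1(|\overline{\Gamma}|)$ is trivial. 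Equivalently: a completely split covering is exactly one that becomes trivial over every closed point, and the coverings classified by $H^1(|\overline{\Gamma}|,\mathbb{Q}_\ell)$ are the ones that only ``see'' the combinatorics of the dual graph — monodromy around loops of components — so they split at every point.

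The step I expect to be the main obstacle is precisely this last verification, that the image of $W_0(H^1(\overline{Y},\mathbb{Q}_\ell))$ in $H^1(X,\mathbb{Q}_\ell)$ lies in the kernel of restriction to all the $H^1(k(v),\mathbb{Q}_\ell)$: it requires the correct functoriality of the Rapoport--Zink weight spectral sequence with respect to the maps $\mathrm{Spec}\,k(v)\to X$ and $\mathrm{Spec}\,\overline{k_1}\to \overline Y$, together with a clean statement that a point of the smooth locus of $Y$ maps to a single vertex of the dual graph. I would handle this by spreading $v$ out to a section (or at least a horizontal curve) of the regular model $\mathcal{X}$ over $O_k$, intersecting it with the special fiber $Y$ to land in the smooth locus of one component $Y_i$, and then invoking that $H^1(|\overline{\Gamma}|,\mathbb{Q}_\ell)\to H^1(\overline{Y_i},\mathbb{Q}_\ell)$ is zero since $|\overline{\Gamma}|$ retracts each vertex to a point. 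Granting this, dualizing the resulting $r$-dimensional subspace of $\ker$ yields the quotient $\mathbb{Q}_\ell^r$ of $\pi_1^{c.s}(X)\otimes\mathbb{Q}_\ell$, and one notes $r=\dim H^1(|\overline{\Gamma}|,\mathbb{Q}_\ell)$ equals the first Betti number of the dual graph, i.e. the rank of its first homology, which is the ``rank of the first crane'' of the weight filtration referred to in the introduction.
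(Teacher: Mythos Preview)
Your proposal is correct and follows essentially the same strategy as the paper: identify $\pi_1^{c.s}(X)\otimes\mathbb{Q}_\ell$ with the dual of $\ker\alpha$, where $\alpha:H^1(X,\mathbb{Q}_\ell)\to\prod_v H^1(k(v),\mathbb{Q}_\ell)$, and then show $W_0(H^1(\overline Y,\mathbb{Q}_\ell))\subseteq\ker\alpha$ by specializing each $v$ to the special fiber. The only cosmetic difference is in the last step: the paper uses the simplicial presentation $W_0=\mathrm{coker}\bigl(H^0(\overline Y^{[0]})\to H^0(\overline Y^{[1]})\bigr)$ and notes that the composite $H^0(\overline Y^{[1]})\to H^1(k(v))$ factors through $H^0(z_v^{[1]})=0$ because the specialization $z_v$ misses the double points (so $z_v^{[1]}=\varnothing$), whereas you phrase the same geometric fact as ``$z_v$ lies in the smooth locus of a single component $\overline{Y_i}$'' and use that $W_0$ is exactly the kernel of $H^1(\overline Y)\to\bigoplus_i H^1(\overline{Y_i})$.
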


\begin{proof}
We know (4.5) that $\pi _{1}^{c.s}\left( X\right) \otimes $
$\mathbb{Q}_{l}$
is the dual of the kernel of the map%
\begin{equation*}
\alpha :H^{1}\left( X,\mathbb{Q}_{l}\right) \longrightarrow %
\mathop{\displaystyle \prod }\limits_{v\in P}H^{1}\left( k\left( v\right) ,%
\mathbb{Q}_{l}\right)
\end{equation*}%
We will prove that $W_{0}(H^{1}\left(
\overline{Y},\mathbb{Q}_{l}\right)
)\subseteq Ker\alpha .$ The group $W_{0}=W_{0}(H^{1}\left( \overline{Y},%
\mathbb{Q}_{l}\right) )$ is calculated as the homologie of the
complex

\begin{equation*}
H^{0}(\overline{Y}^{[0]},\mathbb{Q}_{\ell })\longrightarrow H^{0}(\overline{Y%
}^{[1]},\mathbb{Q}_{\ell })\longrightarrow 0
\end{equation*}%
Hence $W_{0}=$ $H^{0}(\overline{Y}^{[1]},\mathbb{Q}_{\ell })/\mathop{\rm Im}%
\{H^{0}(\overline{Y}^{[0]},\mathbb{Q}_{\ell })\longrightarrow H^{0}(%
\overline{Y}^{[1]},\mathbb{Q}_{\ell })\}.$ Thus, it suffices to
prove the vanishing of the composing map

$H^{0}(\overline{Y}^{[1]},\mathbb{Q}_{\ell })\longrightarrow
W_{0}\subseteq
H^{1}\left( Y,\mathbb{Q}_{l}\right) \hookrightarrow H^{1}\left( X,\mathbb{Q}%
_{l}\right) \longrightarrow H^{1}\left( k\left( v\right) ,\mathbb{Q}%
_{l}\right) $

for all $v\in P.$

Let $z_{v}$ be the $0-$ cycle in $\overline{Y}$ obtained by
specializing $v,$ which induces a map $z_{v}^{[1]}\longrightarrow
\overline{Y}^{[1]}.$ Co8nsequently, the map
$H^{0}(\overline{Y}^{[1]},\mathbb{Q}_{\ell })\longrightarrow
H^{1}\left( k\left( v\right) ,\mathbb{Q}_{l}\right) $ factors as
follows

\begin{equation*}
\begin{array}{ccc}
H^{0}(\overline{Y}^{[1]},\mathbb{Q}_{\ell }) & \longrightarrow &
H^{1}\left(
k\left( v\right) ,\mathbb{Q}_{l}\right) \\
\searrow &  & \nearrow \\
& H^{0}(z_{v}^{[1]},\mathbb{Q}_{\ell }) &
\end{array}
\end{equation*}

But the trace $z_{v}^{[1]}$ of $\overline{Y}^{[1]}$ on $z_{v}$ is
empty. This implies the vanishing of
$H^{0}(z_{v}^{[1]},\mathbb{Q}_{\ell }).$
\end{proof}

\bigskip

Let $V(X)$ be the kernel of the norm map $N:SK_{2}\left( X\right)
\longrightarrow K_{2}(k)$ induced by the norm map $N_{k(v)/k^{x}}:K_{2}%
\left( k(v)\right) \longrightarrow K_{2}(k)$ for all $v$ . Then, we
obtain a map $\tau /l$ $:$ $V(X)/\ell $ $\longrightarrow \pi
_{1}^{ab}\left( X\right) ^{g\acute{e}o}/\ell $ and a commutative
diagram

\begin{equation*}
\begin{array}{ccccc}
V(X)/\ell & \longrightarrow \, & SK_{2}\left( X\right) /\ell &
\rightarrow &
K_{2}(k)/\ell \\
\downarrow \tau /l &  & \downarrow \sigma /\ell &  & \downarrow h/l \\
\pi _{1}^{ab}\left( X\right) ^{g\acute{e}o}/\ell & \longrightarrow &
\pi
_{1}^{ab}\left( X\right) /\ell & \rightarrow & Gal(k^{ab}/k)/l%
\end{array}%
\end{equation*}

where the map $h/l:$ $K_{2}\left( k\right) /l$ $\longrightarrow
Gal(k^{ab}/k)/l$ is the one obtained by class field theory of $k$
(section 3). From this diagram we see that the group $Co\ker \tau
/l$ is isomorphic to the group $Co\ker \sigma /\ell .$ Next, we
investigate the map $\tau /l.$

We begin by the following result which is a consequence of the
structure of the two-dimensional local field $k$

\begin{lem}
There is an isomrphism%
\begin{equation*}
\pi _{1}^{ab}\left( X\right) ^{g\acute{e}o}\simeq \pi
_{1}^{ab}\left( \overline{X}\right) _{G_{k}},
\end{equation*}
where $\pi _{1}^{ab}\left( \overline{X}\right) _{G_{k}}$ is the
group of coinvariants under $G_{k}=Gal(k^{ab}/k)$.
\end{lem}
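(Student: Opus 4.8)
The plan is to read off the exact sequence defining $\pi_1^{ab}(X)^{g\acute{e}o}$ from the Leray (Hochschild--Serre) spectral sequence of the structure morphism $f\colon X\to\operatorname{Spec}k$, the only non-formal ingredient being the vanishing $H^2(k,\mathbb{Q}/\mathbb{Z})=0$ established in Theorem 3.1. Write $G_k$ for the absolute Galois group of $k$, so that $G_k^{ab}=\operatorname{Gal}(k^{ab}/k)$, and set $\overline{X}=X\otimes_k\overline{k}$. Since $X$ is proper and geometrically connected over $k$, one has $R^0f_*\mathbb{Q}/\mathbb{Z}=\mathbb{Q}/\mathbb{Z}$ as a Galois module, and the five-term exact sequence of
\begin{equation*}
E_2^{p,q}=H^p\bigl(G_k,H^q(\overline{X},\mathbb{Q}/\mathbb{Z})\bigr)\Longrightarrow H^{p+q}(X,\mathbb{Q}/\mathbb{Z})
\end{equation*}
reads
\begin{equation*}
0\longrightarrow H^1(k,\mathbb{Q}/\mathbb{Z})\longrightarrow H^1(X,\mathbb{Q}/\mathbb{Z})\longrightarrow H^1(\overline{X},\mathbb{Q}/\mathbb{Z})^{G_k}\longrightarrow H^2(k,\mathbb{Q}/\mathbb{Z}).
\end{equation*}
By Theorem 3.1 the last group vanishes, so this is a short exact sequence.

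Next I would rewrite the three surviving terms by Pontryagin duality. Using that $H^1(-,\mathbb{Q}/\mathbb{Z})$ is the group of continuous characters of the abelianized fundamental group, one has $H^1(k,\mathbb{Q}/\mathbb{Z})=\operatorname{Hom}(\operatorname{Gal}(k^{ab}/k),\mathbb{Q}/\mathbb{Z})$ and $H^1(X,\mathbb{Q}/\mathbb{Z})=\operatorname{Hom}(\pi_1^{ab}(X),\mathbb{Q}/\mathbb{Z})$, while, since the $G_k$-invariants of a dual module are the dual of the $G_k$-coinvariants,
\begin{equation*}
H^1(\overline{X},\mathbb{Q}/\mathbb{Z})^{G_k}=\operatorname{Hom}\bigl(\pi_1^{ab}(\overline{X}),\mathbb{Q}/\mathbb{Z}\bigr)^{G_k}=\operatorname{Hom}\bigl(\pi_1^{ab}(\overline{X})_{G_k},\mathbb{Q}/\mathbb{Z}\bigr).
\end{equation*}
Thus the short exact sequence above is the Pontryagin dual of a short exact sequence of profinite groups; applying Pontryagin duality once more --- it is an exact anti-equivalence, and biduality holds for the profinite groups in play --- yields
\begin{equation*}
0\longrightarrow \pi_1^{ab}(\overline{X})_{G_k}\longrightarrow \pi_1^{ab}(X)\longrightarrow \operatorname{Gal}(k^{ab}/k)\longrightarrow 0,
\end{equation*}
in which, by functoriality of the Leray spectral sequence, the surjection is the canonical structural map. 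Comparing this with the exact sequence that defines $\pi_1^{ab}(X)^{g\acute{e}o}$ gives the asserted isomorphism $\pi_1^{ab}(X)^{g\acute{e}o}\simeq\pi_1^{ab}(\overline{X})_{G_k}$.

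I expect the entire content of the lemma to sit in Theorem 3.1: without the vanishing of $H^2(k,\mathbb{Q}/\mathbb{Z})$ the five-term sequence would only provide exactness of $\pi_1^{ab}(\overline{X})_{G_k}\to\pi_1^{ab}(X)\to\operatorname{Gal}(k^{ab}/k)\to 0$, and the injectivity of the first map --- exactly what distinguishes $\pi_1^{ab}(X)^{g\acute{e}o}$ from a mere quotient of it --- could fail. Everything else is the standard Hochschild--Serre and Pontryagin-duality formalism, together with the bookkeeping identifying the arrow $\pi_1^{ab}(X)\to\operatorname{Gal}(k^{ab}/k)$ with the one occurring in the definition of $\pi_1^{ab}(X)^{g\acute{e}o}$.
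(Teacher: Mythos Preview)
Your argument is correct and is precisely the approach the paper has in mind: the paper's own proof is a one-line reference to Theorem~3.1 and to Lemma~4.3 of Yoshida~[11], and what you have written is exactly the unpacking of that reference via the Hochschild--Serre five-term sequence and Pontryagin duality. In particular your identification of Theorem~3.1 as the sole non-formal input is on the nose.
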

\begin{proof}
$\bigskip $As in the proof of Lemma 4.3 of [11], this is an
immediate consequence of (Theorem 3.1).
\end{proof}
Finally, we are able to deduce the structure of the group $\pi
_{1}^{ab}\left( X\right) ^{g\acute{e}o}$

\begin{thm}
$\bigskip $The group $\pi _{1}^{ab}\left( X\right)
^{g\acute{e}o}\otimes
\mathbb{Q}_{l}$ is isomorphic to $\widehat{\mathbb{Q}_{l}}^{r}$ and the map $%
\tau :$ $V(X)\longrightarrow \pi _{1}^{ab}\left( X\right)
^{g\acute{e}o}$ is a surjection onto $(\pi _{1}^{ab}\left( X\right)
^{g\acute{e}o})_{tor}$.
\end{thm}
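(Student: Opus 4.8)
The plan is to establish the two assertions of the theorem separately, using the isomorphism $\pi_1^{ab}(X)^{g\acute{e}o}\simeq \pi_1^{ab}(\overline X)_{G_k}$ of Lemma 5.2 as the bridge between the arithmetic object and the geometric Tate module, and then invoking Grothendieck's monodromy-weight filtration on $\pi_1^{ab}(\overline X)$ together with the results on $Co\ker\sigma/\ell$ obtained in Section 4.

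First I would set up the weight filtration. Writing $T_\ell = \pi_1^{ab}(\overline X)\otimes\mathbb{Q}_\ell = H^1(\overline X,\mathbb{Q}_\ell)^\vee$, the monodromy operator coming from the action of the inertia of the one-dimensional residue field $k_1$ (recall $k$ is two-dimensional local of characteristic zero, so its residue field $k_1$ is a one-dimensional local field with its own valuation) gives a three-step filtration $0\subseteq W_0\subseteq W_1\subseteq W_2 = T_\ell$, dual to the filtration on $H^1(\overline Y,\mathbb{Q}_\ell)$ already recalled in the excerpt. By Grothendieck's semistable reduction picture, the graded piece $W_2/W_1$ is the ``toric'' part, $\mathrm{gr}_0 = W_0$ is dual to $W_2/W_1$ via the monodromy pairing, and $\mathrm{gr}_1$ is the abelian-variety part; the rank of $W_0$, equivalently of $\mathrm{gr}^W_2$, equals the first Betti number of the dual graph, i.e. the integer $r = \dim_{\mathbb{Q}_\ell} H^1(\lvert\overline\Gamma\rvert,\mathbb{Q}_\ell)$ appearing in Proposition 5.3. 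Taking $G_k$-coinvariants and tensoring with $\mathbb{Q}_\ell$: the inertia in $G_k$ acts through the monodromy operator $N$, and on the toric part $\mathrm{gr}_2$ Frobenius acts by the cyclotomic-type twist which has no invariants/coinvariants after passing to $\mathbb{Q}_\ell$ unless it is the ``trivial'' piece; the point is that $(\pi_1^{ab}(\overline X)_{G_k})\otimes\mathbb{Q}_\ell$ is computed from $\mathrm{gr}_0$, which contributes a free part of rank exactly $r$, while the abelian and toric graded pieces contribute only torsion after taking coinvariants (this is precisely the mechanism of Lemma 4.3 and its surrounding discussion in Yoshida [11]). Combined with Lemma 5.2 this gives $\pi_1^{ab}(X)^{g\acute{e}o}\otimes\mathbb{Q}_\ell\simeq\widehat{\mathbb{Q}_\ell}^{\,r}$, proving the first claim; here the hat indicates that one actually gets the $\ell$-adic completion, and one assembles over all $\ell$ prime to the residue characteristic.

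For the second assertion I would use the commutative diagram just before the theorem, relating the three vertical maps $\tau/\ell$, $\sigma/\ell$, $h/\ell$. From that diagram $Co\ker\tau/\ell\simeq Co\ker\sigma/\ell$, and by the analysis of Section 4 (culminating in (4.4) and Proposition 4.2) $Co\ker\sigma/\ell\simeq\pi_1^{c.s}(X)/\ell$ is the Pontryagin dual of the kernel of $H^1(X,\mathbb{Z}/\ell)\to\prod_{v\in P}H^1(k(v),\mathbb{Z}/\ell)$. Tensoring with $\mathbb{Q}_\ell$ and using Proposition 5.3, this cokernel has free rank exactly $r$, matching the free rank of $\pi_1^{ab}(X)^{g\acute{e}o}$ found above. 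Therefore the image of $\tau$ has finite index complement of rank zero, i.e. $\tau\otimes\mathbb{Q}_\ell$ is surjective onto the free part, which forces $\mathrm{Im}(\tau)\supseteq(\pi_1^{ab}(X)^{g\acute{e}o})_{tor}$ once one checks the image is closed and contains every torsion element; surjectivity of $\tau$ onto the torsion subgroup then follows because the quotient $\pi_1^{ab}(X)^{g\acute{e}o}/\mathrm{Im}(\tau)$ is torsion-free (being a quotient of $Co\ker\sigma$, which by construction is $\widehat{\mathbb{Z}}^{\,r}$, torsion-free) and so cannot meet the torsion subgroup nontrivially.

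The main obstacle I expect is the bookkeeping in the coinvariants computation: one must show carefully that after applying $(-)_{G_k}\otimes\mathbb{Q}_\ell$ the contributions of $\mathrm{gr}_1$ (the good-reduction abelian part) and $\mathrm{gr}_2$ (the toric part) vanish, leaving only $\mathrm{gr}_0$. This uses the Weil bounds on Frobenius eigenvalues on the abelian part (no eigenvalue equal to $1$, so no coinvariants over $\mathbb{Q}_\ell$) together with the monodromy pairing identifying $\mathrm{gr}_0$ with the Tate twist of $\mathrm{gr}_2$; the delicate point is that these are statements over the residue field $k_1$, which is itself a genuine (one-dimensional) local field rather than finite, so one invokes the fact — noted in the excerpt, following Yoshida [11, Section 2] — that Grothendieck's theory of the monodromy-weight filtration on Tate modules is valid over an arbitrary perfect residue field. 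Once that input is in hand, the rest is the diagram chase and the torsion-free quotient argument sketched above.
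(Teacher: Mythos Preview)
Your overall architecture matches the paper's: reduce to coinvariants via Lemma~5.2, invoke the monodromy--weight filtration (in the generality over perfect residue fields noted from Yoshida~[11]) to extract the free rank~$r$, and then combine the $\tau/\sigma$-diagram with Proposition~5.1 for the second assertion. One cosmetic difference: you put the filtration on $T_\ell=\pi_1^{ab}(\overline X)\otimes\mathbb{Q}_\ell$ and then analyse which graded pieces survive $G_k$-coinvariants, whereas the paper invokes directly the filtration $W_0\supseteq W_{-1}\supseteq W_{-2}$ on $\pi_1^{ab}(\overline X)_{G_k}\otimes\mathbb{Q}_\ell$ and cites Yoshida for the structure $0\to(\mathrm{Gr}_0)_{tor}\to\mathrm{Gr}_0\to\widehat{\mathbb{Q}_\ell}^{\,r}\to 0$.

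There is, however, a genuine gap in your argument for the surjectivity of $\tau$ onto the torsion part. You write that $\pi_1^{ab}(X)^{g\acute eo}/\mathrm{Im}(\tau)$ is torsion-free because it is ``a quotient of $Co\ker\sigma$, which by construction is $\widehat{\mathbb Z}^{\,r}$''. Two problems: first, the diagram gives $Co\ker\tau/\ell\simeq Co\ker\sigma/\ell$, not that one is a quotient of the other; second, and more seriously, nothing in Section~4 or Proposition~5.1 shows $Co\ker\sigma\simeq\widehat{\mathbb Z}^{\,r}$. Section~4 identifies $Co\ker\sigma/\ell$ with $\pi_1^{c.s}(X)/\ell$, and Proposition~5.1 only says $\pi_1^{c.s}(X)\otimes\mathbb{Q}_\ell$ admits a \emph{quotient} of type $\mathbb{Q}_\ell^{\,r}$ --- a lower bound on the rank, not torsion-freeness. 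In Saito's one-dimensional setting the isomorphism $Co\ker\sigma\simeq\widehat{\mathbb Z}^{\,r}$ is one of the \emph{main results}, so invoking it here is circular. The paper handles this point differently: from the exact sequence $0\to W_{-1}\to\pi_1^{ab}(\overline X)_{G_k}\to\mathrm{Gr}_0\to 0$ and Proposition~5.1 it concludes that $W_{-1}$ is finite (the rank~$r$ is already exhausted by $\mathrm{Gr}_0$), and then for the actual surjectivity of $\tau$ onto the torsion subgroup it refers explicitly to Yoshida~[11] rather than claiming torsion-freeness of the cokernel. You should either follow that route or supply an independent argument that $\tau$ hits all of $W_{-1}$ and $(\mathrm{Gr}_0)_{tor}$.
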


\begin{proof}
By the preceding lemma, we have the isomorphism $\pi _{1}^{ab}\left(
X\right) ^{g\acute{e}o}\simeq \pi _{1}^{ab}\left(
\overline{X}\right)
_{G_{k}}.$ On the other hand the group $\pi _{1}^{ab}\left( \overline{X}%
\right) _{G_{k}}\otimes \mathbb{Q}_{\ell }$ admits the filtration
[12,Lemma 4.1 and section 2]

\begin{equation*}
W_{0}(\pi _{1}^{ab}\left( \overline{X}\right) _{G_{k}}\otimes \mathbb{Q}%
_{l})=\pi _{1}^{ab}\left( \overline{X}\right) _{G_{k}}\otimes \mathbb{Q}%
_{l}\supseteq W_{-1}(\pi _{1}^{ab}\left( \overline{X}\right)
_{G_{k}}\otimes \mathbb{Q}_{l})\supseteq W_{-2}(\pi _{1}^{ab}\left(
\overline{X}\right) _{G_{k}}\otimes \mathbb{Q}_{l})
\end{equation*}%
But; by assumption; the curve $X$ admits a semi-stable reduction,
then the group $Gr_{0}(\pi _{1}^{ab}\left( \overline{X}\right)
_{G_{k}}\otimes \mathbb{Q}_{l})=W_{0}(\pi _{1}^{ab}\left(
\overline{X}\right)
_{G_{k}}\otimes \mathbb{Q}_{l})/W_{-1}(\pi _{1}^{ab}\left( \overline{X}%
\right) _{G_{k}}\otimes \mathbb{Q}_{l})$ has the following structure%
\begin{equation*}
0\longrightarrow Gr_{0}(\pi _{1}^{ab}\left( \overline{X}\right)
_{G_{k}}\otimes \mathbb{Q}_{l})_{tor}\longrightarrow Gr_{0}(\pi
_{1}^{ab}\left( \overline{X}\right) _{G_{k}}\otimes \mathbb{Q}%
_{l})\longrightarrow \widehat{\mathbb{Q}_{l}}^{r^{\prime
}}\longrightarrow 0
\end{equation*}%
where $r^{\prime }$ is the $k-rank$ of $X$. This is confirmed by
Yoshida [11, section 2], independently of the finitude of the
residue field of $k$
considered in his paper. The integer $r^{\prime }$ is equal to the integer $%
r=H^{1}\left( \left\vert \overline{\Gamma }\right\vert ,\mathbb{Q}%
_{l}\right) =H^{1}\left( \left\vert \Gamma \right\vert ,\mathbb{Q}%
_{l}\right) $ by assuming that the irreducible components and double
points of $\overline{Y}$ are defined over $k_{1},$

On the other hand, the exact sequence%
\begin{equation*}
0\longrightarrow W_{-1}(\pi _{1}^{ab}\left( \overline{X}\right)
_{G_{k}})\longrightarrow \pi _{1}^{ab}\left( \overline{X}\right)
_{G_{k}}\longrightarrow Gr_{0}(\pi _{1}^{ab}\left(
\overline{X}\right) _{G_{k}})\longrightarrow 0
\end{equation*}%
and (Proposition 5.1) allow us to conclude that the group $W_{-1}(\pi
_{1}^{ab}\left( \overline{X}\right) _{G_{k}})$ is finite and the map
$\tau :$ $V(X)\longrightarrow \pi _{1}^{ab}\left( X\right)
^{g\acute{e}o}$ is a surjection onto $(\pi _{1}^{ab}\left( X\right)
^{g\acute{e}o})_{tor}$ as established by Yoshida [11]
\end{proof}

\begin{rem}
\bigskip If we apply the same method of Saito to study curves over
two-dimensional local fields, we need class field theory of
two-dimensional local ring having one-dimensional local field as
residue field. This is done by myself in [1]. Hence, one can follow
Saito 's method to obtain the same results.
\end{rem}

\bigskip

\bigskip

\end{document}